\theoremstyle{plain}
\numberwithin{equation}{section}
\newtheorem{thm}{Theorem}[section]
\newtheorem{lemma}[thm]{Lemma}
\newtheorem{definition}[thm]{Definition}
\newtheorem{proposition}[thm]{Proposition}
\newtheorem{corollary}[thm]{Corollary}
\newtheorem{remark}[thm]{Remark}
\begin{document}
\setcounter{page}{1}
\title[Accelerations of generalized Fibonacci sequences]{Accelerations of generalized Fibonacci sequences}
\author{Marco Abrate}
\address{Dipartimento di Matematica\\
                Universit\`{a} di Torino\\
                Via Carlo Alberto 8\\
                Torino, Italy}
\email{marco.abrate@unito.it}
\author{Stefano Barbero}
\address{Dipartimento di Matematica\\
                Universit\`{a} di Torino\\
                Via Carlo Alberto 8\\
                Torino, Italy}
\email{stefano.barbero@unito.it}
\author{Umberto Cerruti}
\address{Dipartimento di Matematica\\
                Universit\`{a} di Torino\\
                Via Carlo Alberto 8\\
                Torino, Italy}
\email{umberto.cerruti@unito.it}
 \author{Nadir Murru}
\address{Dipartimento di Matematica\\
                Universit\`{a} di Torino\\
                Via Carlo Alberto 8\\
                Torino, Italy}
\email{nadir.murru@unito.it}
\begin{abstract}
In this paper we study how to accelerate the convergence of the ratios $(x_n)$ of generalized Fibonacci sequences. In particular, we provide recurrent formulas in order to generate subsequences $(x_{g_n})$ for every linear recurrent sequence $(g_n)$ of order 2. Using these formulas we prove that some approximation methods, as secant, Newton, Halley and Householder methods, can generate subsequences of $(x_n)$. Moreover, interesting
properties on Fibonacci numbers arise as an application. Finally, we apply all the results to the convergents of a particular continued fraction which represents quadratic irrationalities.
\end{abstract}
\maketitle
\section{Introduction}
The problem of finding sequences of approximations for a solution of a certain equation $f(t)=0$ is really intriguing. There are various and famous methods to generate such sequences in literature, as Newton, secant and Halley methods.
But another interesting problem is how to increase the rate of convergence for the different methods. One of the answer consists in selecting some subsequences of the starting approximations sequences which accelerate the convergence process. In the particular case when $f(t)$ corresponds to a second degree polynomial, we may think about it as a characteristic polynomial of a generalized Fibonacci sequence. The ratio of its consecutive terms generates a new sequence  which converges to the root of larger modulus of $f(t)$, when it exists and it is real. Many authors have given methods to accelerate the rate of convergence of this sequence. McCabe and Phillips \cite{Aitken} have studied how some methods of approximations, like Newton and secant methods, provide an acceleration of such sequence for suitable initial values. Gill and Miller \cite{Gill} have found similar results for the Newton method, only for the ratios of consecutive Fibonacci numbers. Many other works have been developed about this argument. In \cite{Jam} and \cite{Muskat}, e.g., the results of McCabe and Phillips \cite{Aitken} have been generalized for ratios of non--consecutive elements of generalized Fibonacci sequences. In \cite{Nor} these results have been extended to other approximation methods, like the Halley method. The aim of this paper is to give new interesting points of view to prove in an easier way some known results about accelerations of generalized Fibonacci sequences, providing recurrent formulas in order to generate these subsequences. The accelerations related to approximation methods become particular cases of our work. In the first section we will expose our approach, showing in the second section  the relationship with known approximation methods as Newton, secant, Halley and Householder methods. Moreover our results, in addition to a straightforward proof of some identities about Fibonacci numbers, can be applied to convergents of a particular continued fraction representing quadratic irrationalities. We will prove in the last section
that particular subsequences of these convergents correspond to Newton, Halley and secant approximations for the root of larger modulus of a second degree polynomial, when it exists and it is real.

\section{Accelerations of linear recurrent sequences of the second order}
\begin{definition}
We define, over an integral domain with unit $R$, the set $\mathcal{S}(R)$ of sequences $a=(a_n)_{n=0}^{+\infty}$, and the set $\mathcal{W}(R)$ of linear recurrent sequences. Furthermore, we indicate with $a=(a_n)_{n=0}^{+\infty}=\mathcal W(\alpha,\beta,p ,q)$ the linear recurrent sequence of order 2 with characteristic polynomial $t^2-pt+q$ and initial conditions $\alpha$ and $\beta$, i.e.,
$$\begin{cases} a_0=\alpha \cr a_1=\beta \cr a_n=pa_{n-1}-qa_{n-2}\quad \forall n\geq2\ .  \end{cases}$$
\end{definition}
If we consider the sequence $U=\mathcal W(0,1,p,q)$, it is well--known that the sequence $x=(x_n)_{n=2}^{+\infty}$ defined by
\begin{equation}\label{eq:x_n}x_n=\cfrac{U_{n}}{U_{n-1}},\quad \forall n \geq 2\end{equation}
converges to the root of larger modulus (when it is real and it exists) of the characteristic polynomial of $U$. Thus, every subsequence $(x_{g_n})_{n=0}^{+\infty}$ (for some sequence $(g_n)_{n=0}^{+\infty}$) can be considered as a convergence acceleration of the starting sequence $x$. Cerruti and Vaccarino \cite{CerrVacc}  have deeply studied the applications of the companion matrix of linear recurrent sequences. Here we consider some of these results which are useful to find the explicit formulas for the acceleration of the sequences $U$ and $x$. We define
$$M=\begin{pmatrix} 0 & 1 \cr -q & p \end{pmatrix},$$
as the companion matrix of a linear recurrent sequence with characteristic polynomial $t^2-pt+q$.
We recall that some authors may use the transpose of $M$ as the companion matrix.
\begin{lemma}
The sequences $U=\mathcal W(0,1,p,q)$ and $T=\mathcal W(1,0,p,q)$ satisfy the following relations
\begin{equation}\label{UT}\begin{cases} T_{n+1}=-qU_n \cr U_{n+1}=T_n+pU_n  \end{cases},\quad \forall n\geq0.\end{equation} 
\begin{equation}\label{Mpow} M^n=\begin{pmatrix} T_n & U_n \cr T_{n+1} & U_{n+1}  \end{pmatrix}=\begin{pmatrix} T_n & U_n \cr -qU_n & T_n+pU_n  \end{pmatrix},\end{equation}
where $M$ is the companion matrix of $U$ and $T$.
\end{lemma}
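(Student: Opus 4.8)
The plan is to establish the scalar relations \eqref{UT} first and then bootstrap them into the matrix identity \eqref{Mpow} by induction. The key tool throughout is the uniqueness principle for second-order linear recurrences: both $T$ and $U$ obey $a_n=pa_{n-1}-qa_{n-2}$, and any solution of this recurrence is completely determined by two consecutive terms, so to prove an equality of two such sequences it suffices to check that both sides satisfy the recurrence and agree at two initial indices.

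For the first relation in \eqref{UT} I would compare the sequences $(T_{n+1})_{n\ge0}$ and $(-qU_n)_{n\ge0}$. The former is a shift of $T$ and hence satisfies the recurrence, while the latter is a scalar multiple of $U$ and so does as well. They agree at $n=0$ (both equal $0$, using $T_1=0$ and $U_0=0$) and at $n=1$ (both equal $-q$, since $T_2=pT_1-qT_0=-q$ and $-qU_1=-q$), hence they coincide for all $n$. For the second relation I would compare $(U_{n+1})$ with $(T_n+pU_n)$: the former is a shift of $U$ and the latter a linear combination of $T$ and $U$, so both satisfy the recurrence, and they match at $n=0$ (value $1$) and $n=1$ (value $p$), hence everywhere.

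With \eqref{UT} available I would prove \eqref{Mpow} by induction on $n$. The base case $n=0$ is the identity $M^0=I=\begin{pmatrix} T_0 & U_0 \cr T_1 & U_1 \end{pmatrix}$, which holds by the initial conditions $T_0=U_1=1$ and $T_1=U_0=0$. For the inductive step I would write $M^{n+1}=M^nM$, substitute the inductive form of $M^n$, and carry out the $2\times2$ product to obtain entries $-qU_n$, $T_n+pU_n$, $-qU_{n+1}$ and $T_{n+1}+pU_{n+1}$. Each of these is an instance of \eqref{UT} (the last two being the relations at index $n+1$), so rewriting them produces exactly $\begin{pmatrix} T_{n+1} & U_{n+1} \cr T_{n+2} & U_{n+2} \end{pmatrix}$, which completes the induction. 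The second displayed form of $M^n$ is then immediate by substituting \eqref{UT} back into the first.

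The computation is entirely routine, so I do not expect a genuine obstacle; the one point requiring care is logical rather than computational. The inductive step invokes \eqref{UT} at both indices $n$ and $n+1$, which is precisely why I establish the scalar relations for all $n$ before starting the induction. As a more economical alternative that yields both identities simultaneously, one could instead invoke the Cayley--Hamilton theorem, $M^2=pM-qI$, to conclude that each of the four entries of $(M^n)$, regarded as a sequence in $n$, satisfies the order-two recurrence, and then identify the entries with $T$, $U$ and their shifts by matching values on the two matrices $M^0=I$ and $M^1=M$.
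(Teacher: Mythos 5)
Your proof is correct, but it takes a different route from the paper's on both halves of the lemma. For the relations \eqref{UT} the paper writes down the Binet formulas $U_n=\frac{\alpha^n-\beta^n}{\alpha-\beta}$ and $T_n=\frac{\alpha\beta^n-\beta\alpha^n}{\alpha-\beta}$ and uses $\alpha\beta=q$, whereas you invoke the uniqueness principle for second-order recurrences (both sides satisfy the recurrence and agree at two consecutive indices). Your version is arguably preferable here: it stays entirely inside the integral domain $R$ of the paper's own Definition 2.1 and does not need the roots to exist or to be distinct, while the Binet formula as written tacitly assumes $\alpha\neq\beta$ and a splitting field. For \eqref{Mpow} the roles are reversed: the paper uses exactly the Cayley--Hamilton-style argument you relegate to a closing remark (each entry of $M^n$ satisfies $M^n=pM^{n-1}-qM^{n-2}$, so the entries are identified by matching $M^0$ and $M^1$), while your main argument is an induction via $M^{n+1}=M^nM$ that consumes \eqref{UT} at indices $n$ and $n+1$. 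Both are sound; the paper's matrix argument is slightly slicker in that it identifies all four entries at once without needing \eqref{UT} as an input, whereas your induction makes the dependence on the scalar relations explicit. Your care in establishing \eqref{UT} for all $n$ before the induction, and in checking two base indices, is exactly right.
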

\begin{proof}
Using the Binet formula and initial conditions, we easily check that
$$U_n=\cfrac{\alpha^n-\beta^n}{\alpha-\beta},\quad T_n=\cfrac{\alpha\beta^n-\beta\alpha^n}{\alpha-\beta},\quad \forall n\geq0,$$
where $\alpha$ and $\beta$ are the zeros of the characteristic polynomial $t^2-pt+q$. Thus, remembering that $\alpha\beta=q$, we immediately obtain (\ref{UT}) .
In order to prove (\ref{Mpow}), we observe that the characteristic polynomial of $U$ and $T$ annihilates $M$, i.e., we have 
$$M^0=\begin{pmatrix} 1 & 0 \cr 0 & 1 \end{pmatrix},\quad M=\begin{pmatrix} 0 & 1 \cr -q & p \end{pmatrix},\quad M^n=pM^{n-1}-qM^{n-2},\quad \forall n\geq2.$$
So, using (\ref{UT}), we clearly find that the entries of $M^n$ in the first column are the terms $T_n$, $T_{n+1}$ of $T$, and in the second column are the terms $U_n$, $U_{n+1}$ of $U$.   
\end{proof}

\noindent Dealing with definition (\ref{eq:x_n}) of $x$, the ratio of two consecutive terms of any  sequence $a \in \mathcal{W}(R)$ has a closed formula, as we show in the following
\begin{thm}\label{teo:y_n} Let us consider any sequence $a=\mathcal{W}(a_0,a_1,p,q)$ and the sequence 
$y =(y_n)_{n=2}^{+\infty}$, with $y_n=\cfrac{a_n}{a_{n-1}}$, then
\begin{equation}\label{ratio}
y_n=\cfrac{a_n}{a_{n-1}}=\cfrac{a_1x_n-a_0q}{a_0x_n+a_1-a_0p}\quad \forall n\geq2\ ,
\end{equation}
where $x_n$ is the $n$--th term of the sequence $x$ defined by (\ref{eq:x_n}).
\end{thm}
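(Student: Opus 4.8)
The plan is to reduce the ratio $y_n = a_n/a_{n-1}$ of the general sequence $a$ to an expression involving only the single quantity $x_n = U_n/U_{n-1}$, by first writing every term $a_n$ as a fixed linear combination of the two ``fundamental'' sequences $U$ and $T$ of the Lemma, and then eliminating $T$ entirely by means of the relations (\ref{UT}).

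First I would establish the superposition formula $a_n = a_0 T_n + a_1 U_n$, valid for all $n\geq0$. This can be read off directly from (\ref{Mpow}): since $M$ is the companion matrix of the recurrence, one checks that $(a_n,a_{n+1})^{t}=M^n(a_0,a_1)^{t}$, and multiplying out the matrix in (\ref{Mpow}) yields the first coordinate $a_n=a_0T_n+a_1U_n$. Equivalently, $U$ and $T$ satisfy the same recurrence as $a$ with initial vectors $(0,1)$ and $(1,0)$, so by linearity $a_0 T+a_1 U$ has the correct initial values $a_0,a_1$ and hence coincides with $a$. Substituting this into both numerator and denominator gives $y_n=(a_0T_n+a_1U_n)/(a_0T_{n-1}+a_1U_{n-1})$.

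The key step is then to remove the $T$-terms using two \emph{different} consequences of (\ref{UT}). For the numerator I would use $T_n=-qU_{n-1}$ (the first line of (\ref{UT}) with the index shifted by one), obtaining $a_0T_n+a_1U_n=a_1U_n-a_0qU_{n-1}$. For the denominator I would instead use $T_{n-1}=U_n-pU_{n-1}$ (the second line of (\ref{UT}), solved for $T$ and shifted), obtaining $a_0T_{n-1}+a_1U_{n-1}=a_0U_n+(a_1-a_0p)U_{n-1}$. Dividing the resulting numerator and denominator by $U_{n-1}$ and recalling $x_n=U_n/U_{n-1}$ then collapses everything to (\ref{ratio}).

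I expect the only genuine subtlety to be the choice of which identity from (\ref{UT}) to apply where: a naive elimination of $T$ using a single relation would leave a factor $U_{n-2}$ in the denominator and thus reintroduce the ratio $x_{n-1}=U_{n-1}/U_{n-2}$, spoiling the clean dependence on $x_n$ alone. Using $T_n=-qU_{n-1}$ in the numerator but $T_{n-1}=U_n-pU_{n-1}$ in the denominator is precisely what forces both quotients to be expressible through the single ratio $U_n/U_{n-1}$. Beyond this observation the argument is a routine division, valid for $n\geq2$ provided $U_{n-1}\neq0$.
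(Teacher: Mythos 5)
Your proposal is correct and follows essentially the same route as the paper: decompose $a_n=a_0T_n+a_1U_n$, eliminate $T_n$ in the numerator via $T_n=-qU_{n-1}$ and $T_{n-1}$ in the denominator via $T_{n-1}=U_n-pU_{n-1}$ (the paper writes this step as $T_{n-1}=pU_{n-1}-qU_{n-2}-pU_{n-1}$ before invoking the recurrence), then divide by $U_{n-1}$. You additionally justify the superposition formula, which the paper uses without comment.
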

\begin{proof} By direct calculation we have
\begin{eqnarray*}
y_n&=&\cfrac{a_n}{a_{n-1}}=\cfrac{a_1U_n+a_0T_n}{a_1U_{n-1}+a_0T_{n-1}}=\cfrac{a_1U_n+a_0T_n}{a_1U_{n-1}+a_0\left(pU_{n-1}-qU_{n-2}-pU_{n-1}\right)}\\
   &=&\cfrac{a_1U_n-a_0qU_{n-1}}{a_1U_{n-1}+a_0U_{n}-a_0pU_{n-1}}=\cfrac{a_1\cfrac{U_n}{U_{n-1}}-a_0q}{a_0\cfrac{U_n}{U_{n-1}}+a_1-a_0p}=\\
   &=&\cfrac{a_1x_n-a_0q}{a_0x_n+a_1-a_0p}\ .
\end{eqnarray*}
\end{proof}

\noindent Now, as a very important consequence of the previous theorem, we can evaluate the terms of the sequence $x$ shifted by $m$ positions

\begin{corollary}\label{cor:xshift} 
\noindent If $x$ is the sequence defined by (\ref{eq:x_n}), then for all $m\geq2-n$ 
$$
x_{n+m}=\cfrac{x_{m+1}x_n-q}{x_n+x_{m+1}-p}\ .
$$
\end{corollary}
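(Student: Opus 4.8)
The plan is to apply Theorem \ref{teo:y_n} not to $U$ itself but to a shifted copy of it. First I would fix $m$ and introduce the sequence $b=(b_k)_{k\geq0}$ defined by $b_k=U_{m+k}$. Since $U$ obeys $U_n=pU_{n-1}-qU_{n-2}$, every shift of it obeys the same recurrence, so $b$ is again a linear recurrent sequence of order $2$ with characteristic polynomial $t^2-pt+q$; its first two terms are $b_0=U_m$ and $b_1=U_{m+1}$. In the notation fixed at the start of this section this reads precisely $b=\mathcal W(U_m,U_{m+1},p,q)$.

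The key step is then to feed $b$ into formula (\ref{ratio}) with $a_0=U_m$ and $a_1=U_{m+1}$. This gives, for every $n\geq2$,
$$\frac{b_n}{b_{n-1}}=\frac{U_{m+1}x_n-U_mq}{U_mx_n+U_{m+1}-U_mp}\ .$$
On the left-hand side $b_n/b_{n-1}=U_{n+m}/U_{n+m-1}=x_{n+m}$, so only the right-hand side remains to be simplified. Dividing numerator and denominator by $U_m$ and using $x_{m+1}=U_{m+1}/U_m$ from (\ref{eq:x_n}) collapses the expression to
$$x_{n+m}=\frac{x_{m+1}x_n-q}{x_n+x_{m+1}-p}\ ,$$
which is exactly the claimed identity.

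Two points require care, and I expect the index bookkeeping — rather than any algebra — to be the only delicate part, since the heart of the argument is the single substitution into (\ref{ratio}). The division by $U_m$ is legitimate whenever $U_m\neq0$, and the hypothesis $m\geq2-n$ is precisely what forces $n+m\geq2$, so that $x_{n+m}$ is a genuine term of $x$. For $m\geq1$ the reasoning above is literal. For the smaller (and possibly negative) values of $m$ permitted by the hypothesis, I would fall back on the Binet expression $U_k=(\alpha^k-\beta^k)/(\alpha-\beta)$ already established in the proof of the preceding lemma, where $\alpha,\beta$ are the zeros of $t^2-pt+q$; this formula extends $U$ to all integer indices and lets the same shifting argument run verbatim, so that the identity holds throughout the stated range.
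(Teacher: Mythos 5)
Your proof is correct and follows essentially the same route as the paper: the authors likewise set $a=\mathcal W(U_m,U_{m+1},p,q)$, apply formula (\ref{ratio}), and divide through by $U_m$ to reach the stated identity. Your additional remarks on the legitimacy of dividing by $U_m$ and on extending $U$ to negative indices via the Binet formula address edge cases the paper passes over silently, but they do not change the substance of the argument.
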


\begin{proof} Let us consider $a=\mathcal{W}(U_m,U_{m+1},p,q)$, that is $a_n=U_{n+m}$ for all $n\geq0$. So the sequence $y$, introduced in the previous theorem, becomes  $y=(y_n=x_{n+m})_{n=2}^{+\infty}$, and, from (\ref{ratio}), we obtain
\begin{equation}
x_{n+m}=\cfrac{U_{m+1}x_n-U_mq}{U_mx_n+U_{m+1}-U_mp}=\cfrac{x_{m+1}x_n-q}{x_n+x_{m+1}-p}\ .
\end{equation} 
\end{proof}

\noindent The basis sequences $U$ and $T$ play a central role in the following results. The accelerations for such sequences yield explicit formulas for the acceleration of the convergent sequence $x=(x_n)_{n=2}^{+\infty}$. Moreover, some interesting properties on Fibonacci numbers arise as an application.
\noindent We recall without proof an important theorem about the generalized Fibonacci sequences.
\begin{thm}[\cite{CerrVacc}, Th 3.1 and Cor. 3.3] \label{teo:cerr} Denote by $V=\mathcal{W}(2,p,p,q)$ the Lucas sequence with parameters $p$ and $q$, then
$$
W_{mn}(0,1,p,q)=W_{m}(0,1,p,q)\cdot W_{n}(0,1,V_m,q^m),
$$
$$
W_{mn}(1,0,p,q)=W_n(1,0,V_m,q^m)+W_m(1,0,p,q)W_n(0,1,V_m,q^m).
$$ 
\end{thm}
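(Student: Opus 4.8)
The plan is to exploit the Binet representation together with the companion--matrix formalism of the Lemma. Write $\alpha,\beta$ for the roots of $t^2-pt+q$, so that $U_n=\frac{\alpha^n-\beta^n}{\alpha-\beta}$, $T_n=\frac{\alpha\beta^n-\beta\alpha^n}{\alpha-\beta}$ and $V_n=\alpha^n+\beta^n$. The single observation that unlocks everything is that the accelerated characteristic polynomial $t^2-V_mt+q^m$ factors as $(t-\alpha^m)(t-\beta^m)$, because $V_m=\alpha^m+\beta^m$ and $q^m=(\alpha\beta)^m$. Hence the roots attached to the parameters $(V_m,q^m)$ are precisely $\alpha^m$ and $\beta^m$, and the Binet formulas give
$$W_n(0,1,V_m,q^m)=\frac{\alpha^{mn}-\beta^{mn}}{\alpha^m-\beta^m},\qquad W_n(1,0,V_m,q^m)=\frac{\alpha^m\beta^{mn}-\beta^m\alpha^{mn}}{\alpha^m-\beta^m}.$$

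First I would give the matrix proof, which seems cleaner than raw Binet manipulation. From (\ref{Mpow}) one reads off the identity $M^n=T_nI+U_nM$, valid for all $n\geq0$. Since $\det M=q$ and $\operatorname{tr}M=p$, the matrix $N:=M^m$ has $\det N=q^m$ and $\operatorname{tr}N=\alpha^m+\beta^m=V_m$, so by Cayley--Hamilton it satisfies $N^2=V_mN-q^mI$. The crux is then to establish, by an easy induction using the relations (\ref{UT}) now read for the parameters $(V_m,q^m)$, the power formula
$$N^n=W_n(1,0,V_m,q^m)\,I+W_n(0,1,V_m,q^m)\,N,$$
which is the exact analogue, for $N$, of $M^n=T_nI+U_nM$.

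To conclude I would compute $M^{mn}=(M^m)^n=N^n$ in two ways. On one side $M^{mn}=U_{mn}M+T_{mn}I$; on the other, substituting $N=M^m=U_mM+T_mI$ into the power formula gives that the coefficient of $M$ equals $W_n(0,1,V_m,q^m)\,U_m$ and the coefficient of $I$ equals $W_n(1,0,V_m,q^m)+W_n(0,1,V_m,q^m)\,T_m$. Since the $(1,2)$ and $(1,1)$ entries of $aI+bM$ are $b$ and $a$ respectively, the pair $\{I,M\}$ is linearly independent over $R$, and equating the two expressions coefficientwise yields exactly the two claimed identities. The only points needing care are the Cayley--Hamilton computation of $\operatorname{tr}(M^m)=V_m$ (immediate once one notes that the eigenvalues of $M^m$ are $\alpha^m,\beta^m$) and the linear independence used at the end; both are elementary. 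Alternatively, one may verify both identities directly by substituting the Binet formulas above and simplifying, the second requiring the relation $T_k=-qU_{k-1}$ coming from (\ref{UT}); this route is routine but considerably more computational.
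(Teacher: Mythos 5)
Your argument is correct, but note that the paper itself gives no proof of Theorem \ref{teo:cerr}: it is explicitly recalled without proof and attributed to \cite{CerrVacc}, so there is nothing internal to compare against. Your matrix route is a clean, self-contained derivation that fits naturally with the machinery the paper already sets up: the identity $M^n=T_nI+U_nM$ is just a restatement of (\ref{Mpow}), the relation $N^2=V_mN-q^mI$ for $N=M^m$ follows from Cayley--Hamilton once one knows $\det(M^m)=q^m$ and $\operatorname{tr}(M^m)=V_m$ (the latter is perhaps most safely obtained algebraically as $\operatorname{tr}(M^m)=2T_m+pU_m$, which satisfies the Lucas recurrence with initial values $2$ and $p$, rather than via eigenvalues, so that the repeated-root case $\alpha=\beta$ needs no separate treatment), and the induction giving $N^n=W_n(1,0,V_m,q^m)I+W_n(0,1,V_m,q^m)N$ is the same induction that proves (\ref{Mpow}) itself. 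Comparing coefficients of $I$ and $M$ in the two expansions of $M^{mn}$ is legitimate since $aI+bM$ determines $a$ and $b$ from its first row. Your fallback Binet verification also checks out (the second identity reduces, after clearing denominators, to $(\alpha-\beta)(\alpha^m\beta^{mn}-\beta^m\alpha^{mn})+(\alpha\beta^m-\beta\alpha^m)(\alpha^{mn}-\beta^{mn})=(\alpha^m-\beta^m)(\alpha\beta^{mn}-\beta\alpha^{mn})$), and is at the same level of rigor as the paper's own proof of its Lemma, which likewise invokes Binet over an extension of the integral domain $R$. In short: a correct and arguably welcome addition, since it makes the paper self-contained at the cost of half a page.
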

\begin{thm}\label{teoUT}
Given the sequences $U=\mathcal{W}(0,1,p,q)$, $T=\mathcal{W}(1,0,p,q)$, and $x$ as defined in equation (\ref{eq:x_n}), if we choose a sequence $g=\mathcal W(i,j,s,t)$, for some $i$, $j\geq2$, then
$$
U_{g_n}=a_2U^{(s)}_{g_{n-1}}T^{(-t)}_{g_{n-2}}+b_2T^{(s)}_{g_{n-1}}U^{(-t)}_{g_{n-2}}+(a_1b_2+a_2b_4)U^{(s)}_{g_{n-1}}U^{(-t)}_{g_{n-2}}$$
$$T_{g_n}=T^{(s)}_{g_{n-1}}T^{(-t)}_{g_{n-2}}+a_1U^{(s)}_{g_{n-1}}T^{(-t)}_{g_{n-2}}+b_1T^{(s)}_{g_{n-1}}U^{(-t)}_{g_{n-2}}+(a_1b_1+a_2b_3)U^{(s)}_{g_{n-1}}U^{(-t)}_{g_{n-2}}\ ,$$
for every $n\geq2$, where $U^{(s)}=\mathcal{W}(0,1,V_s,q^s)$, $ T^{(s)}=\mathcal{W}(1,0,V_s,q^s)\,$ and $$ M^s=\begin{pmatrix} a_1 & a_2 \cr a_3 & a_4 \end{pmatrix}, \quad M^{-t}=\begin{pmatrix} b_1 & b_2 \cr b_3 & b_4 \end{pmatrix}\ . $$
Moreover, if we choose $x_i$ and $x_j$, for some $i$, $j\geq2$, as the initial steps of the acceleration of $x$, then
\begin{equation} \label{x} x_{g_n}=\cfrac{q^2a_2x_{g_{n-1}}^{(s)}+q^2b_2x_{g_{n-2}}^{(-t)}-q(a_1b_2+a_2b_4)x_{g_{n-1}}^{(s)}x_{g_{n-2}}^{(-t)}}{q^2-qa_1x_{g_{n-1}}^{(s)}-qb_1x_{g_ {n-2}}^{(-t)}+(a_1b_1+a_2b_3)x_{g_{n-1}}^{(s)}x_{g_{n-2}}^{(-t)}}\ ,\quad \forall n\geq2 \ , \end{equation}
where $x^{(s)}=-q\cfrac{U^{(s)}}{T^{(s)}}\ .$ 
\end{thm}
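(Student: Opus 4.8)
The plan is to translate the whole statement into the language of powers of the companion matrix $M$ and then exploit the recurrence satisfied by $g$. The first step is to record the bridge between the ratio $x$ and the basis sequences: combining the two relations in (\ref{UT}) gives $U_{n-1}=-T_n/q$, hence
$$x_n=\frac{U_n}{U_{n-1}}=-q\,\frac{U_n}{T_n},$$
and the scaled quantity $-q\,U^{(s)}/T^{(s)}$ is exactly the $x^{(s)}$ of the statement. By (\ref{Mpow}) the $(1,1)$ and $(1,2)$ entries of $M^{g_n}$ are $T_{g_n}$ and $U_{g_n}$, so both assertions reduce to computing the first row of $M^{g_n}$.

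Using $g_n=s\,g_{n-1}-t\,g_{n-2}$ I would factor
$$M^{g_n}=\bigl(M^{s}\bigr)^{g_{n-1}}\bigl(M^{-t}\bigr)^{g_{n-2}},$$
where $q$ is assumed invertible (so $M^{-t}$ makes sense) and we may work over the field of fractions of $R$. The eigenvalues of $M$ are the roots $\alpha,\beta$ of $t^2-pt+q$, so those of $M^{s}$ are $\alpha^{s},\beta^{s}$; hence $M^{s}$ has trace $V_s=\alpha^s+\beta^s$ and determinant $q^s$, i.e.\ characteristic polynomial $t^2-V_s t+q^{s}$. By Cayley--Hamilton (equivalently, by Theorem \ref{teo:cerr}) a short induction, using $T^{(s)}_k=-q^s U^{(s)}_{k-1}$ (the specialization of (\ref{UT}) to parameters $V_s,q^s$), gives
$$\bigl(M^{s}\bigr)^{k}=U^{(s)}_{k}\,M^{s}+T^{(s)}_{k}\,I.$$
The same argument applied to $M^{-t}$, whose characteristic polynomial is $t^2-V_{-t}t+q^{-t}$, yields $(M^{-t})^{\ell}=U^{(-t)}_{\ell}M^{-t}+T^{(-t)}_{\ell}I$.

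Substituting $k=g_{n-1}$, $\ell=g_{n-2}$ and multiplying out
$$M^{g_n}=\bigl(U^{(s)}_{g_{n-1}}M^{s}+T^{(s)}_{g_{n-1}}I\bigr)\bigl(U^{(-t)}_{g_{n-2}}M^{-t}+T^{(-t)}_{g_{n-2}}I\bigr),$$
I would read off the $(1,2)$ and $(1,1)$ entries with $M^s=\begin{pmatrix}a_1&a_2\\a_3&a_4\end{pmatrix}$ and $M^{-t}=\begin{pmatrix}b_1&b_2\\b_3&b_4\end{pmatrix}$. The cross term $M^{s}M^{-t}$ supplies the coefficients $(a_1b_2+a_2b_4)$ in position $(1,2)$ and $(a_1b_1+a_2b_3)$ in position $(1,1)$, while the linear terms $U^{(s)}_{g_{n-1}}M^s$ and $T^{(s)}_{g_{n-1}}M^{-t}$ (and the scalar term) supply the remaining summands. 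This reproduces verbatim the stated closed forms for $U_{g_n}$ and $T_{g_n}$.

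For the last formula I would feed these into $x_{g_n}=-q\,U_{g_n}/T_{g_n}$ and divide numerator and denominator by $T^{(s)}_{g_{n-1}}T^{(-t)}_{g_{n-2}}$. Replacing $U^{(s)}_{g_{n-1}}/T^{(s)}_{g_{n-1}}=-x^{(s)}_{g_{n-1}}/q$ and $U^{(-t)}_{g_{n-2}}/T^{(-t)}_{g_{n-2}}=-x^{(-t)}_{g_{n-2}}/q$, and then multiplying numerator and denominator by $q^{2}$, the expression collapses to (\ref{x}); the hypothesis $i,j\ge2$ is only used to keep every index $g_n\ge2$ so that each $x_{g_n}$ is defined. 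The single delicate point is the power-reduction step: one must correctly pin down the characteristic polynomials of $M^{s}$ and $M^{-t}$ (including the negative exponent) and keep $U^{(s)},T^{(s)}$ tied to the parameters $V_s,q^s$ rather than to $p,q$; once that bookkeeping is settled, the remaining manipulations are purely mechanical.
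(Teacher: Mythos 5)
Your argument is correct and is essentially the paper's own proof: the identity $(M^{s})^{k}=U^{(s)}_{k}M^{s}+T^{(s)}_{k}I$ is exactly the matrix form of Theorem \ref{teo:cerr} that the paper uses to write $M^{sg_{n-1}}$ and $M^{-tg_{n-2}}$, after which both arguments multiply the two matrices via $M^{g_n}=M^{sg_{n-1}}M^{-tg_{n-2}}$, read off the first row, and divide by $T^{(s)}_{g_{n-1}}T^{(-t)}_{g_{n-2}}$ to land on (\ref{x}). The only cosmetic difference is that you rederive the key decomposition from Cayley--Hamilton rather than quoting it.
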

\begin{proof} We know as a consequence of Theorem \ref{teo:cerr} that 
$$ M^{sg_{n-1}}=\begin{pmatrix} T^{(s)}_{g_{n-1}}+a_1U^{(s)}_{g_{n-1}} & a_2U^{(s)}_{g_{n-1}} \cr a_3U^{(s)}_{g_{n-1}} & T^{(s)}_{g_{n-1}}+a_4U^{(s)}_{g_{n-1}} \end{pmatrix}\ ,$$ 
$$ M^{-tg_{n-2}}=\begin{pmatrix} T^{(-t)}_{g_{n-2}}+b_1U^{(-t)}_{g_{n-2}} & b_2U^{(-t)}_{g_{n-2}} \cr b_3U^{(-t)}_{g_{n-2}} & T^{(-t)}_{g_{n-2}}+b_4U^{(-t)}_{g_{n-2}} \end{pmatrix}. $$
So from 
$$\begin{pmatrix} T_{g_n} & U_{g_n} \cr T_{g_{n}+1} & U_{g_{n}+1}  \end{pmatrix}=M^{g_n}=M^{sg_{n-1}-tg_{n-2}}=M^{sg_{n-1}}M^{-tg_{n-2}}\ , $$
we easily find the explicit expressions of $U_{g_n}$ and $T_{g_n}$.
Finally, to complete the proof, we only need to divide both $U_{g_n}$ and $T_{g_n}$ by the product $T_{g_{n-1}}^{(s)}T_{g_{n-2}}^{(-t)}$ and consider their resulting ratio multiplied by $-q$
$$-q\cfrac{U_{g_n}}{T_{g_n}}=-q\cfrac{a_2\cfrac{U_{g_{n-1}}^{(s)}}{T_{g_{n-1}}^{(s)}}+b_2\cfrac{U_{g_{n-2}}^{(-t)}}{T_{g_{n-2}}^{(-t)}}+(a_1b_2+a_2b_4)\cfrac{U_{g_{n-1}}^{(s)}}{T_{g_{n-1}}^{(s)}}\cfrac{U_{g_{n-2}}^{(-t)}}{T_{g_{n-2}}^{(-t)}}}{1+a_1\cfrac{U_{g_{n-1}}^{(s)}}{T_{g_{n-1}}^{(s)}}+b_1\cfrac{U_{g_{n-2}}^{(-t)}}{T_{g_{n-2}}^{(-t)}}+(a_1b_1+a_2b_3)\cfrac{U_{g_{n-1}}^{(s)}}{T_{g_{n-1}}^{(s)}}\cfrac{U_{g_{n-2}}^{(-t)}}{T_{g_{n-2}}^{(-t)}}}\ ,$$
rearranging the terms we obtain the equality (\ref{x}).
\end{proof}
\begin{corollary}
When $g=\mathcal{W}(2,3,1,-1)$, i.e., $g_{n-3}=F_n$, $\forall n\geq 3$, where $F=\mathcal{W}(0, 1, 1, -1)$ is the Fibonacci sequence, then for all $n \geq 5$
\begin{eqnarray*}
U_{F_n}&=&U_{F_{n-1}}T_{F_{n-2}}+T_{F_{n-1}}U_{F_{n-2}}+pU_{F_{n-1}}U_{F_{n-2}}=\\
&=&-q\left(U_{F_{n-1}}U_{F_{n-2}-1}+U_{F_{n-1}-1}U_{F_{n-2}}\right)+pU_{F_{n-1}}U_{F_{n-2}}\ ,\end{eqnarray*}
$$ x_{F_{n}}=\cfrac{qx_{F_{n-1}}+qx_{F_{n-2}}-px_{F_{n-1}}x_{F_{n-2}}}{q-x_{F_{n-1}}x_{F_{n-2}}}\ . $$
\end{corollary}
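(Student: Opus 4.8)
The plan is to apply Theorem \ref{teoUT} directly with the specific choice $g=\mathcal W(2,3,1,-1)$ and simply track how the generic parameters collapse. In the notation of that theorem the recurrence parameters of $g$ are $s=1$ and $t=-1$, so first I would compute the auxiliary objects attached to these values. Since $V=\mathcal W(2,p,p,q)$ gives $V_1=p$ and $q^1=q$, both accelerating sequences degenerate: $U^{(s)}=U^{(-t)}=\mathcal W(0,1,p,q)=U$ and $T^{(s)}=T^{(-t)}=\mathcal W(1,0,p,q)=T$. Likewise $M^s=M^{-t}=M$, so reading off the entries of the companion matrix gives $a_1=0$, $a_2=1$, $a_3=-q$, $a_4=p$ and $b_1=0$, $b_2=1$, $b_3=-q$, $b_4=p$.

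With these values in hand, the first step is to substitute into the formula for $U_{g_n}$ in Theorem \ref{teoUT}. The coefficient $a_1b_2+a_2b_4$ reduces to $p$, and the mixed products collapse so that $U_{g_n}=U_{g_{n-1}}T_{g_{n-2}}+T_{g_{n-1}}U_{g_{n-2}}+pU_{g_{n-1}}U_{g_{n-2}}$, yielding the first displayed identity once I re-index through $g_{n-3}=F_n$ (which shifts the subscript by three and turns the general range $n\geq2$ into $n\geq5$). For the second form I would invoke relation (\ref{UT}) in the shape $T_m=-qU_{m-1}$, so that $T_{F_{n-2}}=-qU_{F_{n-2}-1}$ and $T_{F_{n-1}}=-qU_{F_{n-1}-1}$; substituting these rewrites the cross terms as $-q\bigl(U_{F_{n-1}}U_{F_{n-2}-1}+U_{F_{n-1}-1}U_{F_{n-2}}\bigr)$, which is exactly the claimed expression.

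For the formula giving $x_{F_n}$ the key observation is that the acceleration sequence $x^{(s)}=-qU^{(s)}/T^{(s)}$ is not a new sequence here: using $T_m=-qU_{m-1}$ again one finds $x^{(s)}_m=-qU_m/T_m=U_m/U_{m-1}=x_m$, and identically $x^{(-t)}_m=x_m$. Hence in (\ref{x}) every $x^{(s)}_{g_{n-1}}$ and $x^{(-t)}_{g_{n-2}}$ may be replaced by $x_{g_{n-1}}$ and $x_{g_{n-2}}$. Substituting the coefficient values, I would compute $a_1b_2+a_2b_4=p$ and $a_1b_1+a_2b_3=-q$, so the numerator becomes $q^2x_{g_{n-1}}+q^2x_{g_{n-2}}-qp\,x_{g_{n-1}}x_{g_{n-2}}$ and the denominator $q^2-q\,x_{g_{n-1}}x_{g_{n-2}}$. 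Factoring $q$ from top and bottom and cancelling (working in the field of fractions, where $q\neq0$ as is already required for $M^{-t}$ to exist) leaves the asserted closed form after the same re-indexing $g_{n-3}=F_n$.

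There is no genuine analytic difficulty in this corollary; it is purely a specialization. The only points demanding care are bookkeeping ones: keeping the recurrence parameters $(s,t)=(1,-1)$ of $g$ distinct from the ambient parameters $(p,q)$ of $U$ and $T$, correctly performing the index shift $g_{n-3}=F_n$ so that the subscripts become consecutive Fibonacci indices and the lower bound becomes $n\geq5$, and justifying the final cancellation of $q$. I expect the re-indexing to be the easiest place to slip, since it simultaneously affects the subscripts $F_{n-1}$, $F_{n-2}$ and the range of $n$.
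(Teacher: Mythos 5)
Your proposal is correct and follows exactly the paper's route: the paper also proves this corollary by specializing Theorem \ref{teoUT} with $s=1$, $t=-1$, noting $M^s=M=M^{-t}$, $U^{(s)}=U=U^{(-t)}$, $T^{(s)}=T=T^{(-t)}$, and reading off the coefficients. Your version merely spells out the bookkeeping (the coefficient values, the use of $T_{m}=-qU_{m-1}$, the cancellation of $q$, and the re-indexing $g_{n-3}=F_n$) that the paper leaves implicit.
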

\begin{proof}
The identities immediately follow from Theorem \ref{teoUT}, observing that in this case we have $s=1$, $t=-1$, consequently $M^s=M=M^{-t}$, $U^{(s)}=U=U^{(-t)}$ and $T^{(s)}=T=T^{(-t)}$. 
\end{proof}
\begin{remark}
When $U$ is the Fibonacci sequence $F$, the previous Corollary provides a direct proof of the interesting formula
$$F_{F_{n}}=F_{F_{n-1}}F_{F_{n-2}-1}+F_{F_{n-1}-1}F_{F_{n-2}}+F_{F_{n-1}}F_{F_{n-2}}\ , $$
for all $n\geq 5$, avoiding many algebraic manipulations based on the basic properties of Fibonacci numbers. This formula is also true for $n=3$ and $n=4$, as a little calculation can show.
\end{remark}
The acceleration shown in the previous corollary is the same found by McCabe and Phillips in \cite{Aitken} with the secant method applied to the sequence $(x_n)_{n=2}^{+\infty}$ shifted by one step.\\
If we consider 
$$ M^{2n}=M^nM^n=\begin{pmatrix} T_n^2-qU_n^2 & 2T_nU_n+pU_n^2 \cr -q\left(2T_nU_n+pU_n^2\right) & T_n^2-qU_n^2+p\left(2T_nU_n+pU_n^2\right)  \end{pmatrix} \ ,$$
we find $ U_{2n}=2T_nU_n+pU_n^2$,  $ T_{2n}=T_n^2-qU_n^2\ , $ and
\begin{equation} \label{NewtonAcc} x_{2n}=-q\cfrac{U_{2n}}{T_{2n}}=\cfrac{2qx_n-px_n^2}{q-x_n^2}\ . \end{equation}
If we start from $x_2$, a repeated use of equation (\ref{NewtonAcc}) yields to the subsequence $x_2,x_4,x_8,...,x_{2^n},...$, corresponding to the subsequence obtained in \cite{Aitken}, using the Newton method on the sequence $(x_n)_{n=2}^{+\infty}$ shifted by one step.
\begin{proposition}
Using the notation of Theorem \ref{teoUT}, for $g=\mathcal{W}(h,h+k,2,1)$, i.e., $g_n=kn+h$  $\forall n\geq0$, then
for all $n\geq 2$ 
$$U_{g_n}=\cfrac{1}{q^{g_{n-2}}}\left(qU^2_{g_{n-1}}U_{g_{n-2}}+2T_{g_{n-1}}U_{g_{n-1}}T_{g_{n-2}}+pU^2_{g_{n-1}}T_{g_{n-2}}-U_{g_{n-2}}T_{g_{n-1}}^2\right) $$
$$ T_{g_n}=\cfrac{1}{q^{g_{n-2}}}\left(T_{g_{n-1}}^2T_{g_{n-2}}+pT_{g_{n-1}}^2U_{g_{n-2}}-qT_{g_{n-2}}U_{g_{n-1}}^2+2qT_{g_{n-1}}U_{g_{n-1}}U_{g_{n-2}}\right)$$
and
$$ x_{g_n}=\cfrac{x^2_{g_{n-1}}x_{g_{n-2}}+2qx_{g_{n-1}}-px^2_{g_{n-1}}-qx_{g_{n-2}}}{q-px_{g_{n-2}}-x^2_{g_{n-1}}+2x_{g_{n-1}}x_{g_{n-2}}}\ .  $$
\end{proposition}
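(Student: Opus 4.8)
The plan is to recognise that the hypothesis $g=\mathcal{W}(h,h+k,2,1)$ places us in the case $s=2$, $t=1$ of Theorem \ref{teoUT}, where the arithmetic progression $g_n=kn+h$ obeys $g_n=2g_{n-1}-g_{n-2}$. Rather than substitute into the general formulas of that theorem, I find it cleaner to recompute directly from the matrix factorisation $M^{g_n}=M^{2g_{n-1}-g_{n-2}}=M^{2g_{n-1}}\bigl(M^{g_{n-2}}\bigr)^{-1}$. The first factor is already at hand: applying the doubling formula for $M^{2n}$ displayed just before the statement, with $n$ replaced by $g_{n-1}$, gives its first row as $\bigl(T_{g_{n-1}}^2-qU_{g_{n-1}}^2,\ 2T_{g_{n-1}}U_{g_{n-1}}+pU_{g_{n-1}}^2\bigr)$.

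The second factor I would obtain as an inverse. Since $\det M=q$, we have $\det M^{g_{n-2}}=q^{g_{n-2}}$, so from (\ref{Mpow}) and the cofactor formula
\[
\bigl(M^{g_{n-2}}\bigr)^{-1}=\frac{1}{q^{g_{n-2}}}\begin{pmatrix} U_{g_{n-2}+1} & -U_{g_{n-2}} \cr -T_{g_{n-2}+1} & T_{g_{n-2}} \end{pmatrix},
\]
and I would then invoke the relations (\ref{UT}), namely $U_{g_{n-2}+1}=T_{g_{n-2}}+pU_{g_{n-2}}$ and $T_{g_{n-2}+1}=-qU_{g_{n-2}}$, to rewrite every entry in terms of $T_{g_{n-2}}$ and $U_{g_{n-2}}$ alone. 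Multiplying the two matrices and reading off the entries of $M^{g_n}$ via (\ref{Mpow}) — the $(1,1)$ entry is $T_{g_n}$ and the $(1,2)$ entry is $U_{g_n}$ — and collecting the common factor $q^{-g_{n-2}}$ yields the two stated closed forms. One should note in particular that in the expression for $T_{g_n}$ the two terms $\mp pq\,U_{g_{n-1}}^2U_{g_{n-2}}$ cancel, which is precisely what produces the compact formula.

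For the ratio $x_{g_n}$ I would use that $x_m=U_m/U_{m-1}=-qU_m/T_m$, the second equality following from $T_m=-qU_{m-1}$ in (\ref{UT}), so that $x_{g_n}=-qU_{g_n}/T_{g_n}$. Dividing numerator and denominator of this fraction by $T_{g_{n-1}}^2T_{g_{n-2}}$ — which simultaneously clears the $q^{-g_{n-2}}$ factor — turns each ratio $U_{g_{n-1}}/T_{g_{n-1}}$ and $U_{g_{n-2}}/T_{g_{n-2}}$ into $-x_{g_{n-1}}/q$ and $-x_{g_{n-2}}/q$ respectively; after this substitution and multiplication through by $q$ to clear the remaining denominators, the claimed rational recurrence for $x_{g_n}$ falls out.

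The argument is entirely elementary, so the only genuine obstacle is bookkeeping: keeping the signs correct through the inverse and the matrix product, tracking the cancellations (especially the one in $T_{g_n}$), and handling the powers of $q$ carefully in the final substitution for $x_{g_n}$. I expect no conceptual difficulty beyond this careful algebra.
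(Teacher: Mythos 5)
Your proposal is correct and follows essentially the same route as the paper: both factor $M^{g_n}=M^{2g_{n-1}}M^{-g_{n-2}}$, use the doubling formula for the first factor and the explicit form $\frac{1}{q^{m}}\left(\begin{smallmatrix} T_m+pU_m & -U_m \\ qU_m & T_m \end{smallmatrix}\right)$ for the second (you reach it via the adjugate and (\ref{UT}), the paper via powers of $\frac{1}{q}\left(\begin{smallmatrix} p & -1 \\ q & 0 \end{smallmatrix}\right)$, a purely cosmetic difference), then read off the entries and divide by $T_{g_{n-1}}^2T_{g_{n-2}}$ to get the recurrence for $x_{g_n}$. No gap to report.
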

\begin{proof}
We need to evaluate
$$M^{g_n}=M^{2g_{n-1}}M^{-g_{n-2}}.$$
Considering that
$$ M^{-n}=\cfrac{1}{q^n}\begin{pmatrix} p & -1 \cr q & 0  \end{pmatrix}^n=\cfrac{1}{q^n}\begin{pmatrix} T_n+pU_n & -U_n \cr qU_n & T_n  \end{pmatrix}, $$
we have
$$ M^{g_n}=\cfrac{1}{q^{g_{n-2}}}\begin{pmatrix} T^2_{g_{n-1}}-qU^2_{g_{n-1}} & 2T_{g_{n-1}}U_{g_{n-1}}+pU^2_{g_{n-1}} \cr  ... & ... \end{pmatrix}\begin{pmatrix} T_{g_{n-2}}+pU_{g_{n-2}} & -U_{g_{n-2}}  \cr qU_{g_{n-2}} & T_{g_{n-2}} \end{pmatrix}\ , $$
and
$$ U_{g_n}=\cfrac{1}{q^{g_{n-2}}}\left(qU^2_{g_{n-1}}U_{g_{n-2}}+2T_{g_{n-1}}U_{g_{n-1}}T_{g_{n-2}}+pU^2_{g_{n-1}}T_{g_{n-2}}-U_{g_{n-2}}T_{g_{n-1}}^2\right)\ , $$
$$ T_{g_n}=\cfrac{1}{q^{g_{n-2}}}\left(T_{g_{n-1}}^2T_{g_{n-2}}+pT_{g_{n-1}}^2U_{g_{n-2}}-qT_{g_{n-2}}U_{g_{n-1}}^2+2qT_{g_{n-1}}U_{g_{n-1}}U_{g_{n-2}}\right).  $$
Finally, dividing both $U_{g_n}$ and $T_{g_n}$ by  $T_{g_{n-1}}^2T_{g_{n-2}}$, their ratio becomes
$$\cfrac{U_{g_n}}{T_{g_n}}=\cfrac{q\left(\cfrac{U_{g_{n-1}}}{T_{g_{n-1}}}\right)^2\cfrac{U_{g_{n-2}}}{T_{g_{n-2}}}+2\cfrac{U_{g_{n-1}}}{T_{g_{n-1}}}+p\left(\cfrac{U_{g_{n-1}}}{T_{g_{n-1}}}\right)^2-\cfrac{U_{g_{n-2}}}{T_{g_{n-2}}}}{1+p\cfrac{U_{g_{n-2}}}{T_{g_{n-2}}}-q\left(\cfrac{U_{g_{n-1}}}{T_{g_{n-1}}}\right)^2+2q\cfrac{U_{g_{n-1}}}{T_{g_{n-1}}}\cfrac{U_{g_{n-2}}}{T_{g_{n-2}}}} $$
from which, remembering that $x_{g_n}=-q\cfrac{U_{g_n}}{T_{g_n}}$, with simple calculations we obtain the thesis.
\end{proof}
\begin{remark}
Applying the last Proposition to the Fibonacci numbers, we get another example of a beautiful identity, easily proved without hard calculations. In fact,  when we consider $g_n=kn$, we have for all $n\geq 2$
$$F_{kn}=(-1)^{k(n-2)}(-F_{k(n-1)}^2F_{k(n-2)}+2F_{k(n-1)}F_{k(n-1)-1}F_{k(n-2)-1}$$
$$+F^2_{k(n-1)}F_{k(n-2)-1}-F_{k(n-2)}F^2_{k(n-1)-1}),$$
which in particular , for $k=1$, becomes
$$F_n=(-1)^{n}\left(-F_{n-1}^2F_{n-2}+2F_{n-1}F_{n-2}F_{n-3}+F^2_{n-1}F_{n-3}-F_{n-2}^3\right).$$
\end{remark}
\section{Approximation methods}
In the previous section we have already seen how some accelerations are connected to some approximation methods, like  Newton and secant methods. In this section, as particular cases of our work, we  exactly retrieve the accelerations provided by these methods and studied in the papers \cite{Gill} and \cite{Aitken}. Furthermore, using our approach, we can show similar results for different approximation methods, like the Halley method. Finally, we will examine the acceleration provided by Householder method, which is a generalization of Newton and Halley methods. First of all, we recall that the secant method, applied to the equation $f(t)=0$, provides rational approximations of a root, starting from two approximations $x_0$ and $x_1$, through the recurrence relation
$$x_n=x_{n-1}-\cfrac{f(x_{n-1})(x_{n-1}-x_{n-2})}{f(x_{n-1})-f(x_{n-2})}\ ,\quad \forall n\geq2,$$
which becomes, when $f(t)=at^2-bt-c$
$$x_n=\cfrac{ax_{n-1}x_{n-2}-c}{ax_{n-1}+ax_{n-2}-b}.$$  

\begin{thm} \label{secant}
Let us consider the Fibonacci sequence $F$ and the sequences $U=\mathcal{W}(0,1,p,q)$, $T=\mathcal{W}(1,0,p,q)$, and $x=(x_n)_{n=2}^{+\infty}$, defined by equation (\ref{eq:x_n}). The subsequence $(x_{F_{n+2}+1})_{n=0}^{+\infty}$ corresponds to the approximations of the root of larger modulus of $t^2-pt+q$ provided by secant method.
\end{thm}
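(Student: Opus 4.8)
The plan is to verify that the subsequence $(x_{F_{n+2}+1})_{n=0}^{+\infty}$ is exactly the sequence of iterates produced by the secant method for $f(t)=t^2-pt+q$, by matching both its recurrence relation and its two seed values. First I would specialize the secant recurrence recalled before the statement: writing $f(t)=t^2-pt+q$ in the form $at^2-bt-c$ (so $a=1$, $b=p$, and constant term $q$), the secant iteration becomes
\[
s_n=\frac{s_{n-1}s_{n-2}-q}{s_{n-1}+s_{n-2}-p},\qquad n\geq 2,
\]
where $s_0,s_1$ are the two initial approximations. The goal is then to prove $s_n=x_{F_{n+2}+1}$ for all $n\geq 0$.

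The engine of the proof is Corollary \ref{cor:xshift}, which gives $x_{n+m}=\frac{x_{m+1}x_n-q}{x_n+x_{m+1}-p}$. Setting $g_n=F_{n+2}+1$, the Fibonacci recurrence $F_{n+2}=F_{n+1}+F_n$ yields the index identity
\[
g_n=F_{n+2}+1=(F_{n+1}+1)+(F_n+1)-1=g_{n-1}+g_{n-2}-1,
\]
that is, $g_n=g_{n-2}+(g_{n-1}-1)$. Applying the corollary with its $n$ replaced by $g_{n-2}$ and its $m$ replaced by $g_{n-1}-1$ (so that $m+1=g_{n-1}$), we obtain
\[
x_{g_n}=\frac{x_{g_{n-1}}\,x_{g_{n-2}}-q}{x_{g_{n-2}}+x_{g_{n-1}}-p},
\]
which is precisely the specialized secant recurrence with previous iterates $x_{g_{n-1}}$ and $x_{g_{n-2}}$. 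Before invoking the corollary I would check its hypothesis $m\geq 2-n$; this is immediate since $g_{n-2}\geq 2$ and $g_{n-1}-1\geq 2$ for every $n\geq 2$, so all indices involved are $\geq 2$ and $x$ is well defined there.

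It remains to match the initial data. Since $F_2=1$ and $F_3=2$, we have $g_0=F_2+1=2$ and $g_1=F_3+1=3$, so the first two terms of the subsequence are $x_2$ and $x_3$, the first two ratios of $(x_n)$, which serve as the seeds $s_0,s_1$ of the secant method. A straightforward induction on $n$, using the recurrence of the previous paragraph for the inductive step and these two equalities for the base, then gives $s_n=x_{F_{n+2}+1}$ for all $n\geq 0$, proving the theorem.

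I expect the only delicate point to be the index bookkeeping: one must assign $g_{n-1}$ to the slot $m+1$ and $g_{n-2}$ to the slot $n$ in Corollary \ref{cor:xshift}, and correctly account for the ``$-1$'' arising from $(F_{n+1}+1)+(F_n+1)=F_{n+2}+2$. Getting this shift right is exactly what makes the corollary's $-q$ and $-p$ reproduce the secant formula; apart from this, the computation is routine.
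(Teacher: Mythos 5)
Your proof is correct, but it takes a different route from the paper's. The paper proves the recurrence for $x_{g_n}$ (with $g_n=F_{n+2}+1$) by working at the level of the matrices: it writes $M^{g_n}=M^{g_{n-1}}M^{g_{n-2}}M^{-1}$, reads off $U_{g_n}=U_{g_{n-1}}U_{g_{n-2}}-\tfrac{1}{q}T_{g_{n-1}}T_{g_{n-2}}$ and $T_{g_n}=U_{g_{n-2}}T_{g_{n-1}}+U_{g_{n-1}}T_{g_{n-2}}+\tfrac{p}{q}T_{g_{n-1}}T_{g_{n-2}}$, and then forms $x_{g_n}=-qU_{g_n}/T_{g_n}$. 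You instead feed the index identity $g_n=g_{n-2}+(g_{n-1}-1)$ directly into Corollary \ref{cor:xshift} with $m+1=g_{n-1}$, which yields the same recurrence
$$x_{g_n}=\cfrac{x_{g_{n-1}}x_{g_{n-2}}-q}{x_{g_{n-2}}+x_{g_{n-1}}-p}$$
in one line. This is exactly the mechanism the paper itself uses later for the Newton and Halley cases in Theorem \ref{Newtonteo}, so your argument is arguably the more unified and economical of the two; the paper's matrix computation buys nothing extra here beyond displaying the underlying $U$, $T$ identities. You are also more careful than the paper on two points it leaves implicit: you check the seeds $g_0=F_2+1=2$, $g_1=F_3+1=3$ so that the induction actually starts from the first two ratios $x_2,x_3$, and you verify the hypothesis of Corollary \ref{cor:xshift} before applying it. One cosmetic remark: your parenthetical identification of the coefficients when matching $t^2-pt+q$ with $at^2-bt-c$ is loosely worded (the constant term of $f$ is $-c$, so $c=-q$), but the specialized secant recurrence you write down, with $-q$ in the numerator, is the correct one, as a direct computation of the secant step for $f(t)=t^2-pt+q$ confirms; so nothing in the argument is affected.
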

\begin{proof}
We take the sequence $g$ defined by $g_n=F_{n+2}+1$ for every $n\geq0$. Since $g_n-1=F_{n+2}$, we have the recurrent formula $g_n=g_{n-1}+g_{n-2}-1$. Let $M$ be the companion matrix of the sequence $U$. From the matrix product
$$M^{g_n}=M^{g_{n-1}}M^{g_{n-2}}M^{-1}$$
we immediately observe that
$$U_{g_n}=U_{g_{n-1}}U_{g_{n-2}}-\cfrac{1}{q}T_{g_{n-1}}T_{g_{n-2}}, \quad T_{g_n}=U_{g_{n-2}}T_{g_{n-1}}+U_{g_{n-1}}T_{g_{n-2}}+\cfrac{p}{q}T_{g_{n-1}}T_{g_{n-2}}\ .$$
Thus
$$\cfrac{U_{g_n}}{T_{g_n}}=\cfrac{\cfrac{U_{g_{n-1}}U_{g_{n-2}}}{T_{g_{n-1}}T_{g_{n-2}}}-\cfrac{1}{q}}{\cfrac{U_{g_{n-2}}}{T_{g_{n-2}}}+\cfrac{U_{g_{n-1}}}{T_{g_{n-1}}}+\cfrac{p}{q}}\ ,$$
and
$$x_{g_n}=-q\cfrac{U_{g_n}}{T_{g_n}}=\cfrac{x_{g_{n-1}}x_{g_{n-2}}-q}{x_{g_{n-2}}+x_{g_{n-1}}-p}\ ,$$
clearly proving the thesis. 
\end{proof}
The Newton method provides rational approximations for a solution of the equation $f(t)=0$. When $f(t)=at^2-bt-c$, given an initial approximation $y_0$, we recall that the Newton approximations sequence satisfies the recurrence relation
\begin{equation}\label{new}y_n=y_{n-1}-\cfrac{ay^2_{n-1}-by_{n-1}-c}{2ay_{n-1}-b}=\cfrac{ay^2_{n-1}+c}{2ay_{n-1}-b},\quad \forall n\geq1 \ . \end{equation}
On the other hand, the Halley method generates approximations for a solution of $t^2-pt+q=0$ through  the following recurrence relation
\begin{equation}\label{hal}y_n=y_{n-1}+\cfrac{(y_{n-1}^2-py_{n-1}+q)(p-2y_{n-1})}{3y^2_{n-1}-3py_{n-1}+p^2-q},\quad\forall n\geq1 \ ,\end{equation}
for a given initial step $y_0$.
In the next theorem we will show how particular subsequences of the sequence $x$, defined by equation (\ref{eq:x_n}), generate the Newton and Halley approximations to the root of larger modulus of $t^2-pt+q\ .$
\begin{thm} \label{Newtonteo}
Given the sequences $U=\mathcal{W}(0,1,p,q)$, $T=\mathcal{W}(1,0,p,q)$, and $x$, defined by equation (\ref{eq:x_n}),
then the recurrence relations
\begin{equation}\label{newrec}
 x_{2n-1}=\cfrac{x_n^2-q}{2x_n-p},\end{equation}
 \begin{equation}\label{halrec}
 x_{3n-2}=x_{n-1}+\cfrac{(x_{n-1}^2-px_{n-1}+q)(p-2x_{n-1})}{3x^2_{n-1}-3px_{n-1}+p^2-q}\ ,
 \end{equation}
respectively generates the subsequences of $x$ producing Newton and Halley approximations for the root of larger modulus of $t^2-pt+q \ .$
\end{thm}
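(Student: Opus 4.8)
The plan is to obtain both recurrences (\ref{newrec}) and (\ref{halrec}) as direct specializations of the shift formula in Corollary \ref{cor:xshift}, and then to recognize the resulting rational maps as the Newton and Halley iterations for $f(t)=t^2-pt+q$. The entire argument rests on the single identity $x_{n+m}=\frac{x_{m+1}x_n-q}{x_n+x_{m+1}-p}$, so no fresh matrix computation is required beyond what Corollary \ref{cor:xshift} already provides.

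For (\ref{newrec}) I would put $m=n-1$ in Corollary \ref{cor:xshift}. Then $x_{m+1}=x_n$, the two arguments of the shift formula collapse to a single value, and the corollary gives at once $x_{2n-1}=\frac{x_n^2-q}{2x_n-p}$. Setting $a=1$, $b=p$, $c=-q$ in the Newton recurrence (\ref{new}) turns it into $y_k=\frac{y_{k-1}^2-q}{2y_{k-1}-p}$, which is exactly the map carrying $x_n$ to $x_{2n-1}$. Since $x_n$ converges to the root of larger modulus, the genuine subsequence obtained by iterating $n\mapsto 2n-1$ converges there as well, and by construction it is the Newton approximation sequence.

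For (\ref{halrec}) I would apply the shift once more. Writing $3n-2=(2n-1)+(n-1)$ and using Corollary \ref{cor:xshift} with shift $m=n-1$ (so again $x_{m+1}=x_n$) on the index $2n-1$ gives $x_{3n-2}=\frac{x_n\,x_{2n-1}-q}{x_n+x_{2n-1}-p}$. Substituting the Newton identity $x_{2n-1}=\frac{x_n^2-q}{2x_n-p}$ and clearing the common factor $2x_n-p$ from numerator and denominator, I expect the numerator to collapse to $x_n^3-3qx_n+pq$ and the denominator to $3x_n^2-3px_n+p^2-q$. The final step is to check that this cubic-over-quadratic expression is precisely Halley's map (\ref{hal}): regrouping $y_{k-1}$ with the correction term over the denominator $3y_{k-1}^2-3py_{k-1}+p^2-q$ yields $\frac{y_{k-1}^3-3qy_{k-1}+pq}{3y_{k-1}^2-3py_{k-1}+p^2-q}$, matching the expression just obtained. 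Iterating $n\mapsto 3n-2$ then produces the Halley subsequence, again convergent to the root of larger modulus.

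The only genuinely delicate point is the Halley simplification: one must carry the algebra carefully through the substitution of the Newton identity and verify that the reduced rational function agrees with the regrouped form of (\ref{hal}). It is also worth double-checking the index bookkeeping, i.e.\ identifying which term of $x$ plays the role of the current Halley iterate, so that the left- and right-hand indices in (\ref{halrec}) are correctly matched. Everything else reduces to a mechanical, and essentially one-line, application of the shift corollary.
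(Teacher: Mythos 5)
Your proposal is correct and follows essentially the same route as the paper: both recurrences come from Corollary \ref{cor:xshift} (the Newton one with $m=n-1$, the Halley one by composing the shift with the Newton identity, since $3n-2=(2n-1)+(n-1)$), after which the resulting rational maps are matched against the Newton and Halley iterations for $t^2-pt+q$. Concerning the index bookkeeping you rightly flag: your computation yields the Halley map evaluated at $x_n$, i.e.\ $x_{3n-2}=\cfrac{x_n^3-3qx_n+pq}{3x_n^2-3px_n+p^2-q}$, which coincides with the paper's own intermediate expression, so the appearance of $x_{n-1}$ in the displayed formula (\ref{halrec}) is an indexing inconsistency in the paper's statement rather than a gap in your argument.
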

\begin{proof}
The proof of relation (\ref{newrec}) is a direct consequence of Corollary \ref{cor:xshift} 
$$
x_{2n-1}=x_{n+n-1}=\cfrac{x_n^2-q}{2x_n-p}\ .
$$
Starting from $x_2$, a repeated use of this equation yields the subsequence $$\left(x_2,x_3,x_5,\ldots,x_{2^n+1},\ldots\right)\ ,$$ which consists of the Newton approximations for the root of larger modulus of $t^2-pt+q\ .$
Relation (\ref{newrec}) and  Corollary \ref{cor:xshift} also provide the proof of recurrence (\ref{halrec}) for Halley approximations 
\begin{eqnarray*}
x_{3n-2}&=&x_{n+2n-2}=\cfrac{x_{2n-1}x_n-q}{x_n+x_{2n-1}-p}=\\
 &=&\cfrac{\cfrac{x_n^2-q}{2x_n-p}x_n-q}{x_n+\cfrac{x_n^2-q}{2x_n-p}-p}=\cfrac{x_{n}^3-3qx_{n}+pq}{3x^2_{n}-3px_{n}+p^2-q}=\\
&=&x_{n-1}+\cfrac{(x_{n-1}^2-px_{n-1}+q)(p-2x_{n-1})}{3x^2_{n-1}-3px_{n-1}+p^2-q}\ .
\end{eqnarray*}
\end{proof}
To complete our overview on approximation methods and generalize the results of previous theorem, we consider the Householder method \cite{House}, which provides approximations for a solution of a nonlinear equation $f(t)=0$. The Householder recurrence relation is 
\begin{equation} \label{house-method}  y_{n+1}=y_n+d\cfrac{(1/f)^{(d-1)}(y_n)}{(1/f)^{(d)}(y_n)},\quad \forall n\geq0  \end{equation}
for a given initial approximation $y_0$, where $f^{(d)}$ is the $d$--th derivative of $f$. The Newton and Halley method clearly are particular cases of this method, for $d=1$ and $d=2$ respectively. Indeed, when $f(t)=t^2-pt+q$ from (\ref{house-method}), when $d=1$ or $d=2$, it is easy to retrieve the formulas (\ref{new}) and (\ref{hal}).
\begin{thm}
With the same hypotheses of Theorem \ref{Newtonteo} , if $y_n=x_k$ for some $k\geq2$, then $y_{n+1}=x_{(d+1)k-d}$, where $y_{n+1}$ is obtained from relation (\ref{house-method}) when $f(t)=t^2-pt+q$.
\end{thm}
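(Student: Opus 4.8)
The plan is to turn the Householder step (\ref{house-method}) into an explicit rational map of the current iterate and then to recognize the image of $x_k$ as a single term of the sequence $x$. Since we are in the situation of Theorem \ref{Newtonteo}, the root of larger modulus is real, so we may assume the two roots $\alpha,\beta$ of $t^2-pt+q$ are distinct and write $f(t)=(t-\alpha)(t-\beta)$. First I would make the partial fraction decomposition explicit, namely $\frac1f=\frac{1}{\alpha-\beta}\left(\frac{1}{t-\alpha}-\frac{1}{t-\beta}\right)$, which differentiates cleanly to $(1/f)^{(d)}(t)=\frac{(-1)^d d!}{\alpha-\beta}\left(\frac{1}{(t-\alpha)^{d+1}}-\frac{1}{(t-\beta)^{d+1}}\right)$. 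Substituting this into (\ref{house-method}), the factor $\alpha-\beta$ and the factorials cancel, and the leading $d$ cancels the $1/d$ coming from the ratio of two consecutive derivatives, leaving the closed form $y_{n+1}=y_n-\dfrac{(y_n-\alpha)^{-d}-(y_n-\beta)^{-d}}{(y_n-\alpha)^{-(d+1)}-(y_n-\beta)^{-(d+1)}}$, valid for every $d\geq1$.

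Next I would evaluate this map at $y_n=x_k=U_k/U_{k-1}$, using the Binet formula $U_n=(\alpha^n-\beta^n)/(\alpha-\beta)$ recalled in the first Lemma. A one line computation gives $U_k-\alpha U_{k-1}=\beta^{k-1}$ and $U_k-\beta U_{k-1}=\alpha^{k-1}$, hence $x_k-\alpha=\beta^{k-1}/U_{k-1}$ and $x_k-\beta=\alpha^{k-1}/U_{k-1}$. Raising the reciprocals to the $d$-th and $(d+1)$-th powers, factoring $(\alpha\beta)^{m(k-1)}=q^{m(k-1)}$ out of each difference and using $\alpha^{m(k-1)}-\beta^{m(k-1)}=(\alpha-\beta)U_{m(k-1)}$, the correction term collapses to $\dfrac{q^{k-1}}{U_{k-1}}\cdot\dfrac{U_{d(k-1)}}{U_{(d+1)(k-1)}}$. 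Therefore $y_{n+1}=\dfrac{U_kU_{(d+1)(k-1)}-q^{k-1}U_{d(k-1)}}{U_{k-1}U_{(d+1)(k-1)}}$.

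It remains to identify this with $x_{(d+1)k-d}=U_{(d+1)k-d}/U_{(d+1)k-d-1}$. The pivotal index relation is $(d+1)k-d-1=(d+1)(k-1)$, so the denominator $U_{(d+1)(k-1)}$ is already common to both expressions, and everything reduces to the single $U$-identity $U_kU_{(d+1)(k-1)}-q^{k-1}U_{d(k-1)}=U_{k-1}U_{(d+1)k-d}$. Writing $a=d(k-1)$, so that $(d+1)(k-1)=a+k-1$ and $(d+1)k-d=a+k$, this is exactly $U_kU_{a+k-1}-U_{k-1}U_{a+k}=q^{k-1}U_a$, which follows in one line from Binet by expanding both products, cancelling the $\alpha^{a+2k-1}$ and $\beta^{a+2k-1}$ terms, and factoring out $(\alpha\beta)^{k-1}=q^{k-1}$. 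I expect this index bookkeeping to be the only genuine obstacle: the partial fraction and substitution steps are mechanical, but one must line up the exponents as $a+k-1$ and $a+k$ so that the Binet identity applies verbatim; once it does, $y_{n+1}=x_{(d+1)k-d}$ follows at once, which is the assertion. As a check, $d=1$ gives $x_k\mapsto x_{2k-1}$ and $d=2$ gives $x_k\mapsto x_{3k-2}$, recovering the Newton and Halley accelerations of Theorem \ref{Newtonteo}.
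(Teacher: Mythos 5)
Your proposal is correct and follows essentially the same route as the paper: both use the partial fraction decomposition of $1/f$ to get an explicit formula for $(1/f)^{(d)}$, rewrite the Householder step as a rational expression in $y_n-\alpha$ and $y_n-\beta$, and then substitute $x_k$ via the Binet formula. The only difference is the endgame --- the paper collapses everything directly into powers of the roots and reads off $U_{(d+1)k-d}/U_{(d+1)k-d-1}$, while you repackage the correction term through the d'Ocagne-type identity $U_kU_{a+k-1}-U_{k-1}U_{a+k}=q^{k-1}U_a$ --- which is a cosmetic, not a substantive, divergence.
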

\begin{proof}
Let $\alpha_1$, $\alpha_2$ be the roots of $t^2-pt+q$, we can write
\small
$$\left(\cfrac{1}{t^2-pt+q}\right)^{(d)}=\left( \cfrac{ (t-\alpha_1)^{-1}-(t-\alpha_2)^{-1}}{\alpha_1-\alpha_2}\right)^{(d)}
=\cfrac{(-1)^dd!\left((t-\alpha_2)^{d+1}-(t-\alpha_1)^{d+1}\right)}{(\alpha_1-\alpha_2)(t-\alpha_1)^{d+1}(t-\alpha_2)^{d+1}}\ .$$
\normalsize
Thus, relation (\ref{house-method}) becomes
$$y_{n+1}=y_n-\cfrac{(y_n-\alpha_1)(y_n-\alpha_2)\left((y_n-\alpha_2)^{d}-(y_n-\alpha_1)^{d}\right)}{(y_n-\alpha_2)^{d+1}-(y_n-\alpha_1)^{d+1}}\ ,$$
and with some algebraic manipulations we obtain
$$y_{n+1}=\cfrac{\alpha_1(y_n-\alpha_2)^{d+1}-\alpha_2(y_n-\alpha_1)^{d+1}}{(y_n-\alpha_2)^{d+1}-(y_n-\alpha_1)^{d+1}}\ .$$
If we set $y_n=x_k=\cfrac{U_k}{U_{k-1}}=\cfrac{\alpha_1^k-\alpha_2^k}{\alpha_1^{k-1}-\alpha_2^{k-1}}$, then
\begin{eqnarray*}y_{n+1}&=&\cfrac{\alpha_1\left(\cfrac{\alpha_1^k-\alpha_2\alpha_1^{k-1}}{\alpha_1^{k-1}-\alpha_2^{k-1}}\right)^{d+1}-\alpha_2\left(\cfrac{\alpha_1\alpha_2^{k-1}-\alpha_2^{k}}{\alpha_1^{k-1}-\alpha_2^{k-1}}\right)^{d+1}}{\left(\cfrac{\alpha_1^k-\alpha_2\alpha_1^{k-1}}{\alpha_1^{k-1}-\alpha_2^{k-1}}\right)^{d+1}-\left(\cfrac{\alpha_1\alpha_2^{k-1}-\alpha_2^{k}}{\alpha_1^{k-1}-\alpha_2^{k-1}}\right)^{d+1}}=\\
&=&\cfrac{\alpha_1^{(d+1)k-d}(\alpha_1-\alpha_2)^{d+1}-\alpha_2^{(d+1)k-d}(\alpha_1-\alpha_2)^{d+1}}{\alpha_1^{(d+1)k-d-1}(\alpha_1-\alpha_2)^{d+1}-\alpha_2^{(d+1)k-d-1}(\alpha_1-\alpha_2)^{d+1}}=\\
&=& \cfrac{U_{(d+1)k-d}}{U_{(d+1)k-d-1}}=x_{(d+1)k-d}\ .
\end{eqnarray*}
\end{proof}

\section{Applications to continued fractions}
In order to show some interesting applications of the exposed results, we study the acceleration of a sequence of rationals which come from the sequence of convergents of a certain continued fraction. The continued fraction, which we will introduce, provides a periodic representation of period 2 for every quadratic irrationality (except for the square roots). Surely its convergents do not generate the best approximations for these irrationalities (because we use rational partial quotients, instead of integers), but between them, using the acceleration method of the previous section, we can find at the same time the approximations derived from the the secant method, the Newton method and the Halley method.\\
We remember that a continued fraction is a representation of a real number $\alpha$ through a sequence of integers as follows:
$$\alpha=a_0+\cfrac{1}{a_1+\cfrac{1}{a_2+\cfrac{1}{a_3+\cdots}}}\ ,$$
where the integers $a_0,a_1,\ldots$ can be evaluated with the recurrence relations 
$$\begin{cases} a_k=[\alpha_k]\cr \alpha_{k+1}=\cfrac{1}{\alpha_k-a_k} \quad \text{if} \ \alpha_k \ \text{is not an integer}  \end{cases}  \quad k=0,1,2,\ldots \ ,$$
for $\alpha_0=\alpha$ (see, e.g., \cite{Olds}). A continued fraction can be expressed in a compact way using the notation $[a_0,a_1,a_2,a_3,\ldots]$. The finite continued fraction
$$[a_0,\ldots,a_n]=C_n=\cfrac{p_n}{q_n}\ ,\quad n=0,1,2,\ldots \ ,$$
is a rational number and is called the $n$--th \emph{convergent} of $[a_0,a_1,a_2,a_3,\ldots]$ and the $a_i$'s are called \emph{partial quotients}. Furthermore, the sequences $(p_n)_{n=0}^{+\infty}$ and $(q_n)_{n=0}^{+\infty}$ are recusively defined by the equations
\begin{equation} \label{conv} \begin{cases} p_n=a_np_{n-1}+p_{n-2} \cr q_n=a_nq_{n-1}+q_{n-2}  \end{cases},\quad \forall n\geq2 \ , \end{equation}
and initial conditions $p_0=a_0$, $p_1=a_0a_1+1$, $q_0=1$, $q_1=a_1\ .$
In the following we will use rational numbers as partial quotients, instead of the usual integers. 
\begin{remark}
In general a continued fraction can have complex numbers as partial quotients. In this case given a real number there are no algorithms which provide the partial quotients. However, such a continued fraction can converge to a real number. In the following, we will study continued fractions of period 2 with rational partial quotients which are convergents to the root of larger modulus of a quadratic equation. For a deep study of the convergence of continued fractions, i.e., for an analytic theory of these objects, see \cite{Wall}.
\end{remark}
\begin{remark}
A continued fraction with rational partial quotients $\left[\cfrac{a_0}{b_0},\cfrac{a_1}{b_1},...\right]$ has an equivalent form as
\small $$ \cfrac{a_0}{b_0}+\cfrac{b_1}{a_1+\cfrac{b_1b_2}{a_2+\cfrac{b_2b_3}{a_3+\ddots}}}, $$\normalsize
but the representation
\small $$  \cfrac{a_0}{b_0}+\cfrac{1}{\cfrac{a_1}{b_1}+\cfrac{1}{\cfrac{a_2}{b_2}+\cfrac{1}{\cfrac{a_3}{b_3}+\ddots}}} $$ \normalsize
is more suitable for the study of the convergents, as we will see soon.
\end{remark}
When we study the continued fraction $\left[\cfrac{a_0}{b_0},\cfrac{a_1}{b_1},\cfrac{a_2}{b_2},...\right]$, the equations (\ref{conv}) become
\begin{equation} \label{conv1} \begin{cases} p_n=\cfrac{a_n}{b_n} p_{n-1}+p_{n-2} \cr q_n=\cfrac{a_n}{b_n} q_{n-1}+q_{n-2}  \end{cases},\quad \forall n\geq2 \ , \end{equation}
with initial conditions $p_0=\cfrac{a_0}{b_0}$, $p_1=\cfrac{a_0}{b_0}\cfrac{a_1}{b_1}+1$, $q_0=1$, $q_1=\cfrac{a_1}{b_1}\ ,$ 
providing two sequences of rationals . In the next Proposition we study how to determine the convergents through the ratio of two recurrent sequences of integers.
\begin{proposition} \label{conv-prop}
Given the continued fraction $\left[\cfrac{a_0}{b_0},\cfrac{a_1}{b_1},\cfrac{a_2}{b_2},...\right]$, let $(p_n)_{n=0}^{+\infty}$ and $(q_n)_{n=0}^{+\infty}$ be the sequences which provide the sequence of convergents $(C_n)_{n=0}^{+\infty}$ defined recursively by equations (\ref{conv1}). If we consider the sequences $(s_n)_{n=0}^{+\infty}$, $(t_n)_{n=0}^{+\infty}$, $(u_n)_{n=0}^{+\infty}$ defined for all $n\geq 2$
$$  \begin{cases}s_n=a_ns_{n-1}+b_nb_{n-1}s_{n-2}\cr t_n=a_nt_{n-1}+b_nb_{n-1}t_{n-2}\cr u_n=b_nu_{n-1} \end{cases}\text{and initial conditions} \quad \begin{cases} s_0=a_0, s_1=a_0a_1+b_0b_1 \cr t_0=1, t_1=a_1 \cr u_0=1\end{cases}$$
then $p_n=\cfrac{s_n}{b_0u_n}$ and $q_n=\cfrac{t_n}{u_n}$, for every $n\geq 0$, i.e.,
$$C_n=\cfrac{s_n}{b_0t_n},\quad \forall n\geq0.$$
\end{proposition}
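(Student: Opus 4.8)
The plan is to proceed by induction on $n$, comparing directly the recurrences (\ref{conv1}) satisfied by $p_n,q_n$ with those defining $s_n,t_n,u_n$. First I would dispose of the base cases $n=0$ and $n=1$ by substituting the given initial data. For $n=0$ one checks $\cfrac{s_0}{b_0u_0}=\cfrac{a_0}{b_0}=p_0$ and $\cfrac{t_0}{u_0}=1=q_0$. For $n=1$, noting that $u_1=b_1$ and $p_1=\cfrac{a_0a_1+b_0b_1}{b_0b_1}$, one verifies $\cfrac{s_1}{b_0u_1}=\cfrac{a_0a_1+b_0b_1}{b_0b_1}=p_1$ and $\cfrac{t_1}{u_1}=\cfrac{a_1}{b_1}=q_1$. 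In particular the two claimed identities hold for the first two indices, which is what the second-order recurrences require.

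For the inductive step, the key observation I would exploit is that the auxiliary sequence $u_n$ is simply the running product $u_n=b_1b_2\cdots b_n$, so that it obeys \emph{both} $u_n=b_nu_{n-1}$ and $u_{n-1}=b_{n-1}u_{n-2}$; these are exactly the relations needed to reconcile the three denominators $u_n$, $u_{n-1}$, $u_{n-2}$. Assuming the statement for $n-1$ and $n-2$, I would insert $p_{n-1}=\cfrac{s_{n-1}}{b_0u_{n-1}}$ and $p_{n-2}=\cfrac{s_{n-2}}{b_0u_{n-2}}$ into $p_n=\cfrac{a_n}{b_n}p_{n-1}+p_{n-2}$ and reduce everything to the common denominator $b_0u_n$. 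Using $b_nu_{n-1}=u_n$ the first summand becomes $\cfrac{a_ns_{n-1}}{b_0u_n}$, and using $b_nb_{n-1}u_{n-2}=u_n$ the second becomes $\cfrac{b_nb_{n-1}s_{n-2}}{b_0u_n}$; their sum is precisely $\cfrac{s_n}{b_0u_n}$ by the defining recurrence $s_n=a_ns_{n-1}+b_nb_{n-1}s_{n-2}$. The computation for $q_n$ is word-for-word the same with $s$ replaced by $t$ and the factor $b_0$ dropped, giving $q_n=\cfrac{t_n}{u_n}$.

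There is no genuine conceptual obstacle here; the only point demanding care is the bookkeeping of the $b$-factors when clearing denominators, namely recognizing that moving from the denominator $u_{n-2}$ up to the common denominator $u_n$ contributes exactly the factor $b_nb_{n-1}$. This is precisely the coefficient $b_nb_{n-1}$ that appears in the recurrences for $s_n$ and $t_n$, and that match is the whole reason those auxiliary sequences were defined as they are. Once the $u$-recurrence is used to synchronize the denominators, both identities drop out at once, and the final formula $C_n=\cfrac{p_n}{q_n}=\cfrac{s_n}{b_0t_n}$ follows immediately by taking the quotient, since the factors $u_n$ cancel.
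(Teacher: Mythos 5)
Your proposal is correct and follows essentially the same route as the paper: induction with the same base cases $n=0,1$, followed by substituting the inductive hypothesis into the recurrence (\ref{conv1}) and using $u_n=b_nu_{n-1}=b_nb_{n-1}u_{n-2}$ to match the coefficient $b_nb_{n-1}$ in the recurrence for $s_n$ (and $t_n$). The only difference is cosmetic bookkeeping (reducing to the common denominator $b_0u_n$ directly rather than cross-multiplying first), so nothing further is needed.
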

\begin{proof}
We prove the thesis by induction. When $n=0$, and $n=1$ we have respectively
$$p_0=\cfrac{a_0}{b_0}=\cfrac{s_0}{b_0u_0},\quad q_0=1=\cfrac{t_0}{u_0}$$
$$p_1=\cfrac{a_0a_1+b_0b_1}{b_0b_1}=\cfrac{s_1}{b_0u_1},\quad q_1=\cfrac{a_1}{b_1}=\cfrac{t_1}{u_1}\ .$$
Now considering that
$$p_n=\cfrac{a_n}{b_n}p_{n-1}+p_{n-2},\quad n\geq2\ ,$$
by induction hypothesis we have
\begin{eqnarray*}
p_n&=&\cfrac{a_n}{b_n}\cdot\cfrac{s_{n-1}}{b_0u_{n-1}}+\cfrac{s_{n-2}}{b_0u_{n-2}}=\cfrac{a_ns_{n-1}u_{n-2}+b_nu_{n-1}s_{n-2}}{b_0b_nu_{n-1}u_{n-2}}=\\
&=&\cfrac{a_ns_{n-1}u_{n-2}+b_nb_{n-1}u_{n-2}s_{n-2}}{b_0u_nu_{n-2}}=\cfrac{s_n}{b_0u_n}\ . \end{eqnarray*}
Similarly we obtain $q_n=\cfrac{t_n}{u_n}$.
\end{proof}
We finally focus our attention on a particular continued fraction of period 2. Precisely, we consider the continued fraction
\begin{equation} \label{fc} \left[\ \overline{\cfrac{b}{a},\cfrac{b}{c}}\ \right] \end{equation}
for $a,b,c\in\mathbb Z$ not zero. This continued fraction converges to the quadratic irrationality, of larger modulus, root of $ax^2-bx-c$. Indeed, it is easy to check that
$$\alpha=\cfrac{b}{a}+\cfrac{1}{\cfrac{b}{c}+\cfrac{1}{\alpha}}=\cfrac{b}{a}+\cfrac{c\alpha}{b\alpha+c}\Leftrightarrow ab\alpha^2+ac\alpha=b^2\alpha+bc+ac\alpha\Leftrightarrow a\alpha^2-b\alpha-c=0.$$
From the previous Proposition we know that the convergents of (\ref{fc}) are determined by a linear recurrent sequence. Indeed, in this case the sequences $s=(s_n)_{n=0}^{+\infty}$ and $(t_n)_{n=0}^{+\infty}$ of the Proposition \ref{conv-prop} correspond to
$$s=\mathcal{W}(b,b^2+ac,b,-ac),\quad t=\mathcal W(1,b,b,-ac)\ .$$
Thus, defining the sequence
$$\sigma=\mathcal W(0,1,b,-ac)\ ,$$
we obtain that the convergents $(C_n)$ of our continued fraction are
$$C_n=\cfrac{\sigma_{n+2}}{a\sigma_{n+1}},\quad \forall n\geq0\ .$$
Clearly, they are not the best approximations of the quadratic irrationality, but since the convergents are determined by the ratio of the sequence $\sigma$, we can apply our method of acceleration to such a sequence. In particular, we will see that properly accelerating the convergents sequence, we can find the approximations derived by  Newton, Halley and secant method, which compare at the same time between these approximations.
\begin{thm}
Let us consider the continued fraction $\left[\ \overline{\cfrac{b}{a},\cfrac{b}{c}}\ \right]$, whose sequence of convergents is  $(C_n)_{n=0}^{+\infty}$, and the sequence $\sigma=\mathcal{W}(0,1,b,-ac)$. If $\alpha$ is the root of larger modulus of $at^2-bt-c$, then 
\begin{enumerate}
\item $(C_{F_{n+2}-1})_{n=0}^{+\infty}$, are the approximations of $\alpha$  by the secant method;
\item $(C_{2^n-1})_{n=0}^{+\infty}$, are the Newton approximations of $\alpha$;
\item $(C_{3^n-1})_{n=0}^{+\infty}$, are the Halley approximations of $\alpha$.
\end{enumerate}
\end{thm}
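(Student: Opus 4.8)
The plan is to reduce all three statements to results already proved for the ratio sequence of $\sigma$, by means of a single change of variable. First I would record how the convergents relate to that ratio sequence. Writing $\sigma=\mathcal W(0,1,b,-ac)$, which is exactly $U$ with parameters $p=b$, $q=-ac$, its ratio sequence is $x_m=\sigma_m/\sigma_{m-1}$, and the identity $C_n=\sigma_{n+2}/(a\sigma_{n+1})$ established just before the theorem gives at once
$$C_n=\frac1a\cdot\frac{\sigma_{n+2}}{\sigma_{n+1}}=\frac{x_{n+2}}{a},\qquad \forall n\ge 0.$$
The characteristic polynomial of $\sigma$ is $t^2-bt-ac$, and if $\alpha$ is the root of larger modulus of $at^2-bt-c$ then $a\alpha$ is the root of larger modulus of $t^2-bt-ac$; hence $x_m\to a\alpha$ and $C_n\to\alpha$, in accordance with the convergence of the continued fraction.

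The core of the proof, and the step I expect to be the main (if routine) obstacle, is a scaling lemma: division by $a$ conjugates each of the three iterations for $g(t)=t^2-bt-ac$ into the corresponding iteration for $f(t)=at^2-bt-c$. Precisely, I would show that if $(w_k)$ is the secant (resp.\ Newton, Halley) sequence for $g$ tending to $a\alpha$, then $(w_k/a)$ is the secant (resp.\ Newton, Halley) sequence for $f$ tending to $\alpha$. For Newton and Halley the cleanest route is the closed form already obtained in the proof of the Householder theorem, where the iteration is written through the roots $\alpha_1,\alpha_2$ as $y\mapsto\frac{\alpha_1(y-\alpha_2)^{d+1}-\alpha_2(y-\alpha_1)^{d+1}}{(y-\alpha_2)^{d+1}-(y-\alpha_1)^{d+1}}$; scaling the roots to $a\alpha_1,a\alpha_2$ and substituting $y=az$ makes an overall factor $a$ come out, so this map is equivariant under $y\mapsto y/a$, and $d=1$, $d=2$ dispatch Newton and Halley at once. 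For the secant method I would instead substitute $w=az$ into the secant recurrence for $g$, namely $w_k=\frac{w_{k-1}w_{k-2}-ac}{w_{k-1}+w_{k-2}-b}$, and verify that it collapses exactly to $z_k=\frac{az_{k-1}z_{k-2}-c}{az_{k-1}+az_{k-2}-b}$, the secant recurrence for $f$ recalled at the beginning of Section 3.

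Finally I would assemble the three cases by applying the earlier theorems to $\sigma$, i.e.\ with $p=b$ and $q=-ac$, and combining them with the displayed identity $C_n=x_{n+2}/a$ and the scaling lemma. For (1), Theorem \ref{secant} says $(x_{F_{n+2}+1})_{n\ge0}$ are the secant approximations of $a\alpha$; since $C_{F_{n+2}-1}=x_{(F_{n+2}-1)+2}/a=x_{F_{n+2}+1}/a$, the scaling lemma turns these into the secant approximations of $\alpha$. For (2), relation (\ref{newrec}) of Theorem \ref{Newtonteo} produces the Newton subsequence $(x_{2^n+1})_{n\ge0}$ for $a\alpha$ (note $x_{2^0+1}=x_2=b$), and $C_{2^n-1}=x_{2^n+1}/a$ then gives the Newton approximations of $\alpha$. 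For (3), the Halley step, which by the computation in the proof of Theorem \ref{Newtonteo} sends $x_m$ to $x_{3m-2}$, iterated from $x_2$ yields the indices $2,4,10,28,\dots$, that is $3^n+1$; since $C_{3^n-1}=x_{3^n+1}/a$, dividing by $a$ gives the Halley approximations of $\alpha$. The only real bookkeeping to watch is the uniform index shift by $2$ between $C$ and $x$ together with the fact that all three subsequences start from the first convergent $C_0=x_2/a=b/a$ (the secant scheme additionally using $C_1$), so the three methods are genuinely started from the same initial data.
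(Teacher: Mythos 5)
Your proposal is correct and follows essentially the same route as the paper: use $C_n=x_{n+2}/a$ with $x_n=\sigma_n/\sigma_{n-1}$, apply Theorems \ref{secant} and \ref{Newtonteo} with $p=b$, $q=-ac$, and transfer the three iterations from $t^2-bt-ac$ to $at^2-bt-c$ by the scaling $y\mapsto y/a$. The only difference is that you justify the Newton/Halley scaling explicitly via the Householder closed form in terms of the roots, whereas the paper verifies it by substitution only for the secant case and dismisses the other two as easy to see; your version just fills in that detail.
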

\begin{proof}
We use the auxiliary sequence $x=(x_n)_{n=0}^{+\infty}$ defined by
$$x_n=\cfrac{\sigma_n}{\sigma_{n-1}},\quad \forall n\geq2.$$
We can directly apply Theorem \ref{secant}, obtaining that $(x_{F_{n+2}+1})_{n=0}^{+\infty}$ are the approximations through the secant method of the root of larger modulus of $t^2-bt-ac$. Considering the sequence $\bar x$ defined by
$$\bar x_n=\cfrac{x_n}{a},\quad \forall n\geq2,$$
we have
$$\bar x_{g_n}=\cfrac{x_{g_n}}{a}=\cfrac{1}{a}\cdot\cfrac{x_{g_{n-1}}x_{g_{n-2}}+ac}{x_{g_{n-2}}+x_{g_{n-1}}-b}=\cfrac{a\bar x_{g_{n-1}}\bar x_{g_{n-2}}+c}{a\bar x_{g_{n-2}}+a\bar x_{g_{n-1}}-b},$$
i.e. $(\bar x_{g_n})_{n=0}^{+\infty}$ are the approximations through the secant method of $\alpha$. Thus, remembering that $aC_n=x_{n+2}$ for every $n\geq0$, we have that $(C_{F_{n+2}-1})_{n=0}^{+\infty}$ are the approximations through the secant method of $\alpha$.\\
Applying Theorem \ref{Newtonteo} to the sequence $x$ we obtain 
$$x_{2n-1}=\cfrac{x_n^2+ac}{2x_n-b}\ ,$$
i.e., $x_2$, $x_3$, $x_5$, $x_9$, $\ldots$, $x_{2^n+1}$, $\ldots\,$ are the Newton approximations for the root of larger modulus of $t^2-bt-ac$. Now it is easy to see that $\bar x_2,\bar x_3,\bar x_5,...,\bar x_{2^n+1}$ are the Newton approximations of $\alpha$ for every $n\geq2$, i.e., $(C_{2^n-1})_{n=0}^{+\infty}$ are the Newton approximations of $\alpha$.\\
Finally, using Theorem \ref{Newtonteo} again, with similar observations, it is possible to prove that $(C_{3^n-1})_{n=0}^{+\infty}$ are the Halley approximations of $\alpha$.
\end{proof}

\end{document}